\providecommand{\U}[1]{\protect\rule{.1in}{.1in}}
\newtheorem{theorem}{Theorem}
\theoremstyle{plain}
\newtheorem{corollary}{Corollary}
\newtheorem{definition}{Definition}
\newtheorem{remark}{Remark}
\numberwithin{equation}{section}
\begin{document}
\title[Majorization in Spaces with a Curved Geometry]{Majorization in Spaces with a Curved Geometry}
\author{Constantin P. Niculescu}
\address{University of Craiova, Department of Mathematics, Street A. I. Cuza 13,
Craiova 200585, Romania}
\email{cniculescu47@yahoo.com}
\author{Ionel Roven\c{t}a}
\address{University of Craiova, Department of Mathematics, Street A. I. Cuza 13,
Craiova 200585, Romania}
\email{roventaionel@yahoo.com}
\thanks{Corresponding author: Constantin P. Niculescu.}
\date{April 3, 2012}
\subjclass[2000]{Primary 52A41; Secondary 46A55, 52A05.}
\keywords{generalized convex function, global NPC space, majorization of measures,
Wasserstein distance}

\begin{abstract}
The Hardy-Littlewood-P\'{o}lya majorization theorem is extended to the
framework of some spaces with a curved geometry (such as the global NPC spaces
and the Wasserstein spaces). We also discuss the connection between our
concept of majorization and the subject of Schur convexity.

\end{abstract}
\maketitle

In 1929, G. H. Hardy, J. E. Littlewood and G. P\'{o}lya \cite{HLP1},
\cite{HLP} have proved an important characterization of convex functions in
terms of a partial ordering of vectors $x=(x_{1},...,x_{n})$ in $\mathbb{R}%
^{n}$. In order to state it we need a preparation. We denote by $x^{\downarrow
}$ the vector with the same entries as $x$ but rearranged in decreasing order,%
\[
x_{1}^{\downarrow}\geq\cdots\geq x_{n}^{\downarrow}.
\]
Then $x$ is \emph{weakly} \emph{majorized} by $y$ (abbreviated, $x\prec_{\ast
}y)$ if%
\begin{equation}
\sum_{i\,=\,1}^{k}\,x_{i}^{\downarrow}\leq\sum_{i\,=\,1}^{k}\,y_{i}%
^{\downarrow}\quad\text{for }k=1,...,n \tag{1}\label{1}%
\end{equation}
and $x$ is \emph{majorized} by $y$ (abbreviated, $x\prec y)$ if in addition
\begin{equation}
\sum_{i\,=\,1}^{n}\,x_{i}^{\downarrow}=\sum_{i\,=\,1}^{n}\,y_{i}^{\downarrow
}\,. \tag{2}\label{2}%
\end{equation}

Intuitively, $x\prec y$ means that the components in $x$ are less spread out
than the components in $y$. As is shown in Theorem 1 below, the concept of
majorization admits an order-free characterization based on the notion of
doubly stochastic matrix. Recall that a matrix $A\in\,$M$_{n}(\mathbb{R})$ is
\emph{doubly stochastic} if it has nonnegative entries and each row and each
column sums to unity.

\begin{theorem}
\label{equiv}\emph{(Hardy, Littlewood and P\'{o}lya \cite{HLP1}, Theorem 8).}
Let $x$ and $y$ be two vectors in $\mathbb{R}^{n}$, whose entries belong to an
interval $I.$ Then the following statements are equivalent:

$i)$ $x\prec y;$

$ii)$ There is a doubly stochastic matrix $A=(a_{ij})_{1\leq i,j\leq n}$ such
that $x=Ay;$

$iii)$ The inequality $\sum_{i=1}^{n}f(x_{i})\leq\sum_{i=1}^{n}f(y_{i})$,
holds for every continuous convex function $f:I\rightarrow\mathbb{R}$.
\end{theorem}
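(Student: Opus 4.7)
The plan is to prove the chain of implications $(ii)\Rightarrow(iii)\Rightarrow(i)\Rightarrow(ii)$, which is the customary route.

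For $(ii)\Rightarrow(iii)$, I would invoke Jensen's inequality row-by-row. Writing $x_{i}=\sum_{j}a_{ij}y_{j}$, the convexity of $f$ gives $f(x_{i})\le\sum_{j}a_{ij}f(y_{j})$, and then summing over $i$ and using that the columns of $A$ also sum to $1$ yields $\sum_{i}f(x_{i})\le\sum_{j}f(y_{j})$. Nothing delicate here; it is just a double-sum interchange.

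For $(iii)\Rightarrow(i)$, the idea is to test $(iii)$ against a carefully chosen family of convex functions. Applying $f(t)=t$ and $f(t)=-t$ produces the equality $\sum x_{i}=\sum y_{i}$, i.e.\ condition \eqref{2}. Next, for each $k=1,\dots,n-1$, take $c=y_{k}^{\downarrow}$ and apply $(iii)$ to the convex function $f_{c}(t)=(t-c)^{+}$. The right-hand side collapses to $\sum_{i=1}^{k}y_{i}^{\downarrow}-kc$, while the estimate $(x_{i}^{\downarrow}-c)^{+}\ge x_{i}^{\downarrow}-c$ bounds the left-hand side from below by $\sum_{i=1}^{k}x_{i}^{\downarrow}-kc$, delivering \eqref{1}.

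The implication $(i)\Rightarrow(ii)$ is the substantive content of the theorem, and I expect it to be the main obstacle, because one must actually build a doubly stochastic $A$ from purely numerical inequalities. The most transparent approach is via \emph{$T$-transforms}, that is, convex combinations of the identity with a single transposition,
\[
T=\lambda I+(1-\lambda)\tau_{ij},\qquad 0\le\lambda\le 1.
\]
Each such $T$ is doubly stochastic, and products of doubly stochastic matrices are doubly stochastic. The key lemma is: if $x\prec y$ with $x\neq y^{\downarrow}$ (up to permutation), then there exist indices $i<j$ and a $T$-transform applied to $y$ producing a vector $y'$ with $x\prec y'\prec y$ and with strictly more coordinates of $y'$ agreeing with the corresponding coordinates of $x^{\downarrow}$. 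Iterating this procedure at most $n$ times expresses $x$ as $T_{N}\cdots T_{1}y$, and the required doubly stochastic matrix is the product. The bookkeeping in selecting the right indices $i,j$ and the weight $\lambda$ (so that $y'$ still majorizes $x$) is the part that requires the most care; an efficient choice is to take $i$ the smallest index with $x_{i}^{\downarrow}<y_{i}^{\downarrow}$ and $j$ the smallest index $>i$ with $x_{j}^{\downarrow}>y_{j}^{\downarrow}$, and then to equalize one coordinate. Alternatively, one may appeal to Birkhoff's theorem on the extreme points of the doubly stochastic polytope, but the $T$-transform argument is self-contained and tailored to the situation, which is why I would favour it for this theorem.
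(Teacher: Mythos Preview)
Your proposal is correct and follows the standard classical route. Note, however, that the paper does not supply its own proof of this theorem: it is quoted as a known result of Hardy, Littlewood and P\'{o}lya, and immediately after the statement the authors write that ``the proof of this result is also available in the recent monographs \cite{MOA} and \cite{NP2006}.'' So there is no in-paper argument to compare against. Your chain $(ii)\Rightarrow(iii)\Rightarrow(i)\Rightarrow(ii)$, with Jensen for the first step, the test functions $t\mapsto\pm t$ and $t\mapsto(t-c)^{+}$ for the second, and the $T$-transform reduction for the third, is precisely the proof one finds in those references (and in \cite{HLP}), so your write-up is fully in line with what the paper is pointing to.
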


The proof of this result is also available in the recent monographs \cite{MOA}
and \cite{NP2006}.

\begin{remark}
M. Tomi\'{c} \cite{T1949} and H. Weyl \cite{W1949} have noticed the following
characterization of weak majorization: $x\prec_{\ast}y$ if and only if
$\sum_{i=1}^{n}f(x_{i})\leq\sum_{i=1}^{n}f(y_{i})$ for every continuous
nondecreasing convex function $f$ defined on an interval containing the
components of $x$ and $y.$ The reader will find the details in \cite{MOA},
Proposition B2, p. 157.
\end{remark}

Nowadays there are known many important applications of majorization to matrix
theory, numerical analysis, probability, combinatorics, quantum mechanics etc.
See \cite{Bh}, \cite{MOA}, \cite{NP2006}, \cite{NCu}, and \cite{Ph}. They were
made possible by the constant growth of the theory, able to uncover the most
diverse situations.

In what follows we will be interested in a simple but basic extension of the
concept of majorization as was mentioned above: the weighted majorization.
Indeed, the entire subject of majorization can be switched from vectors to
Borel probability measures by identifying a vector $x=(x_{1},...,x_{n})$ in
$\mathbb{R}^{n}$ with the discrete measure $\frac{1}{n}\sum_{i=1}^{n}%
\delta_{x_{i}}$ acting on $\mathbb{R}$. By definition,%
\[
\frac{1}{n}\sum_{i=1}^{n}\delta_{x_{i}}\prec\frac{1}{n}\sum_{i=1}^{n}%
\delta_{y_{i}}%
\]
if the conditions (1) and (2) above are fulfilled, and Theorem 1 can be
equally seen as a characterization of this instance of majorization.

Choquet's theory made available a very general framework of majorization by
allowing the comparison of Borel probability measures whose supports are
contained in a compact convex subset of a locally convex separated space. The
highlights of this theory are presented in \cite{Ph} and refer to a concept of
majorization based on condition $iii)$ in Theorem 1 above. Of interest to us
is the particular case of discrete probability measures on the Euclidean space
$\mathbb{R}^{N},$ that admits an alternative approach via condition $ii)$ in
the same Theorem 1. Indeed, in this case one can introduce a relation of the
form
\begin{equation}
\sum_{i=1}^{m}\lambda_{i}\delta_{x_{i}}\prec\sum_{j=1}^{n}\mu_{j}\delta
_{y_{j}} \tag{3}\label{3}%
\end{equation}
by asking the existence of a $m\times n$-dimensional matrix $A=(a_{ij})_{i,j}$
such that%
\begin{gather}
a_{ij}\geq0,\text{ for all }i,j\tag{4}\label{4}\\
\sum_{j=1}^{n}a_{ij}=1,\text{\quad}i=1,...,m\tag{5}\label{5}\\
\mu_{j}=\sum_{i=1}^{m}a_{ij}\lambda_{i}\text{,\quad}j=1,...,n \tag{6}\label{6}%
\end{gather}
and%
\begin{equation}
x_{i}=\sum_{j=1}^{n}a_{ij}y_{j}\text{,\quad}i=1,...,n \tag{7}\label{7}%
\end{equation}

The matrices verifying the conditions (4)\&(5) are called \emph{stochastic on
rows}. When $m=n$ and all weights $\lambda_{i}$ and $\mu_{j}$ are equal, the
condition (6) assures the \emph{stochasticity on columns, }so in that case we
deal with doubly stochastic matrices.

The fact that (3) implies%
\[
\sum_{i=1}^{m}\lambda_{i}f(x_{i})\prec\sum_{j=1}^{n}\mu_{j}f(y_{j}),
\]
for every continuous convex function $f$ defined on a convex set containing
all points $x_{i}$ and $y_{i},$ is covered by a general result due to S.
Sherman \cite{Sh}. See also the paper of J. Borcea \cite{Bor2007} for a nice
proof and important applications.

It is worth noticing that the extended definition of majorization given by (3)
is related, via equality (7), to an optimization problem as follows:
\[
x_{i}=\arg\min_{z\in\mathbb{R}^{N}}\frac{1}{2}\sum_{j=1}^{n}a_{ij}\left\Vert
z-y_{j}\right\Vert ^{2},\text{\quad for }i=1,...,m.
\]

The aim of the present paper is to discuss the analogue of the relation of
majorization (3) within certain classes of spaces with curved geometry. We
will start with the spaces with global nonpositive curvature (abbreviated,
global NPC spaces). The subject of majorization in these spaces was touched in
\cite{N2009} via a different concept of majorization. Central to us here is
the generalization of Theorem 1.

\begin{definition}
\label{def1}A global NPC space is a complete metric space $M=(M,d)$ for which
the following inequality holds true: for each pair of points $x_{0},x_{1}\in
M$ there exists a point $y\in M$ such that for all points $z\in M,$%
\begin{equation}
d^{2}(z,y)\leq\frac{1}{2}d^{2}(z,x_{0})+\frac{1}{2}d^{2}(z,x_{1})-\frac{1}%
{4}d^{2}(x_{0},x_{1}). \tag{8}\label{NPC}%
\end{equation}

\end{definition}

These spaces are also known as the Cat 0 spaces. See \cite{BH99}. In a global
NPC space, each pair of points $x_{0},x_{1}\in M$\ can be connected by a
geodesic (that is, by a rectifiable curve $\gamma:[0,1]\rightarrow M$ such
that the length of $\gamma|_{[s,t]}$ is $d(\gamma(s),\gamma(t))$ for all
$0\leq s\leq t\leq1)$. Moreover, this geodesic is unique.

In a global NPC space, the geodesics play the role of segments. The point $y$
that appears in Definition 1 is the \emph{midpoint} of $x_{0}$ and $x_{1}$ and
has the property
\[
d(x_{0},y)=d(y,x_{1})=\frac{1}{2}d(x_{0},x_{1}).
\]

Every Hilbert space is a global NPC space. Its geodesics are the line segments.

The upper half-plane \textbf{H}$=\left\{  z\in\mathbb{C}:\operatorname{Im}%
z>0\right\}  $, endowed with the Poincar\'{e} metric,%
\[
ds^{2}=\frac{dx^{2}+dy^{2}}{y^{2}},
\]
constitutes another example of a global NPC space. In this case the geodesics
are the semicircles in \textbf{H} perpendicular to the real axis and the
straight vertical lines ending on the real axis.

A Riemannian manifold $(M,g)$ is a global NPC space if and only if it is
complete, simply connected and of nonpositive sectional curvature. Besides
manifolds, other important examples of global NPC spaces are the Bruhat-Tits
buildings (in particular, the trees). See \cite{BH99}. More information on
global NPC spaces is available in \cite{Ba05}, \cite{Jo97}, and \cite{Sturm}.
See also our papers \cite{N2009} and \cite{NR2009}.

\begin{definition}
\label{def2}A set $C\subset M$ is called \emph{convex} if $\gamma
([0,1])\subset C$ for each geodesic $\gamma:[0,1]\rightarrow M$ joining
$\gamma(0),\gamma(1)\in C$.

A function $\varphi:C\rightarrow\mathbb{R}$ is called convex if $C$ is a
convex set and for each geodesic $\gamma:[0,1]\rightarrow C$ the composition
$\varphi\circ\gamma$ is a convex function in the usual sense, that is,
\[
\varphi(\gamma(t))\leq(1-t)\varphi(\gamma(0))+t\varphi(\gamma(1))
\]
for all $t\in\lbrack0,1].$

The function $\varphi$ is called concave if $-\varphi$ is convex.
\end{definition}

The distance function on a global NPC space $M=(M,d)$ verifies not only the
inequality (\ref{NPC}), but also the following stronger version of it,%
\[
d^{2}(z,x_{t})\leq(1-t)d^{2}(z,x_{0})+td^{2}(z,x_{1})-t(1-t)d^{2}(x_{0}%
,x_{1});
\]
here $z\ $is any point in $C$ and $x_{t}$ is any point on the geodesic
$\gamma$ joining $x_{0},x_{1}\in C$. In terms of Definition \ref{def2}, this
shows that all the functions $d^{2}(\cdot,z)$ are uniformly convex. In
particular, they are convex and the balls are convex sets.

In a global NPC space $M=(M,d)$ the distance function $d$ is convex on
$M\times M$ and also convex are the functions $d(\cdot,z).$ See \cite{Sturm},
Corollary 2.5, for details.

Recall that the direct product of metric spaces $M_{i}=(M_{i},d_{i})$
($i=1,...,n)$ is the metric space $M=(M,d_{M})$ defined by $M=%
{\displaystyle\prod\nolimits_{i=1}^{n}}
M_{i}$ and%
\[
d_{M}(x,y)=\left(  \sum_{i=1}^{n}d_{i}(x_{i},y_{i})^{2}\right)  ^{1/2}.
\]
It is a global NPC space if all factors are global NPC spaces.

When $x_{1},...,x_{m},y_{1},...,y_{n}$ are points of a global NPC space $M,$
and $\lambda_{1},...,\lambda_{m}\in\lbrack0,1]$ are weights that sum to 1, we
will define the relation of majorization%
\begin{equation}
\sum_{i=1}^{m}\lambda_{i}\delta_{x_{i}}\prec\sum_{j=1}^{n}\mu_{j}\delta
_{y_{j}} \tag{9}\label{M}%
\end{equation}
by asking the existence of an $m\times n$-dimensional matrix $A=(a_{ij}%
)_{i,j}$ that is stochastic on rows and verifies the following two conditions:%
\begin{equation}
\mu_{j}=\sum_{i=1}^{m}a_{ij}\lambda_{i},\quad j=1,...,n \tag{10}\label{10}%
\end{equation}
and
\begin{equation}
x_{i}=\arg\min_{z\in M}\frac{1}{2}\sum_{j=1}^{n}a_{ij}d^{2}(z,y_{j}%
),\text{\quad}i=1,...,m. \tag{11}\label{11}%
\end{equation}
The existence and uniqueness of the problems of optimization (11) is assured
by the fact that the objective functions are uniformly convex and positive.
See \cite{Jo97}, Section 3.1, or \cite{Sturm}, Proposition 1.7, p. 3.

Notice that the above definition agrees with the usual one in the Euclidean
case. It is also related to the definition of the barycenter of a Borel
probability measure $\mu$ defined on a global NPC space $M$. Precisely, if
$\mu\in\mathcal{P}_{2}(M)$ (the set of those probability measures under which
all functions $d^{2}(\cdot,z)$ are integrable), then its barycenter is defined
by the formula
\[
\operatorname*{bar}(\mu)=\arg\min_{z\in M}\frac{1}{2}\int_{M}d^{2}%
(z,x)d\mu(x).
\]
This definition, due to E. Cartan \cite{Car}, was inspired by Gauss' Least
Squares Method. A larger approach of the notion of barycenter is offered by
the recent paper of Sturm \cite{Sturm}.

The particular case of discrete probability measures $\lambda=\sum_{i=1}%
^{n}\lambda_{i}\delta_{x_{i}}$ is of special interest because the barycenter
of $\lambda$ can be seen as a good analogue for the convex combination (or
weighted mean) $\lambda_{1}x_{1}+\cdots+\lambda_{n}x_{n}.$ Indeed,%
\[
\operatorname*{bar}(\lambda)=\arg\min_{z\in M}\frac{1}{2}\sum_{i=1}^{n}%
\lambda_{i}d^{2}(z,x_{i}),
\]
and the way $\operatorname*{bar}(\lambda)$ provides a mean with nice features
was recently clarified by Lawson and Lim \cite{LL}. As an immediate
consequence one obtains the relation%
\[
\delta_{\operatorname*{bar}(\lambda)}\prec\lambda.
\]

A word of caution when denoting $\operatorname*{bar}(\lambda)$ as $\lambda
_{1}x_{1}+\cdots+\lambda_{n}x_{n}$. Probably a notation like $\lambda_{1}%
x_{1}\boxplus\cdots\boxplus\lambda_{n}x_{n}$ suits better because
$\operatorname*{bar}(\lambda)$ can be far from the usual the arithmetic mean.
Consider for example the case where $M$ is the space $\operatorname*{Sym}%
^{++}(n,\mathbb{R)}$ (of all positively definite matrices with real
coefficients), endowed with the \emph{trace metric},
\[
d_{\operatorname*{trace}}(A,B)=\left(  \sum_{k=1}^{n}\log^{2}\lambda
_{k}\right)  ^{1/2},
\]
where $\lambda_{1},\dots,\lambda_{n}$ are the eigenvalues of $AB^{-1}$. In
this case
\[
\frac{1}{2}A\boxplus\frac{1}{2}B=A^{1/2}(A^{-1/2}BA^{-1/2})^{1/2}A^{1/2},
\]
that is, it coincides with the geometric mean of $A$ and $B.$ See
\cite{Bh2007}, Section 6.3, or \cite{LL2001}, for details.

Since the convex combinations within a global NPC space lack in general the
property of associativity,%
\[
\sum_{i=1}^{n+1}\lambda_{i}x_{i}=(1-\lambda_{n+1})\left(  \sum_{i=1}^{n}%
\frac{\lambda_{i}}{1-\lambda_{n+1}}x_{i}\right)  +\lambda_{n+1}x_{n+1},
\]
the proof of Jensen's inequality is not trivial even in the discrete case.
This explains why this inequality was first stated in this context only in
2001 by J. Jost\emph{ \cite{J1994}}. We recall it here in the formulation of
Eells and Fuglede \cite{EF}, Proposition 12.3, p. 242:

\begin{theorem}
\label{thm2}\emph{(Jensen's Inequality)}. For any lower semicontinuous convex
function $f:M\rightarrow\mathbb{R}$ and any Borel probability measure $\mu
\in\mathcal{P}_{2}(M)$ we have the inequality%
\[
f(\operatorname*{bar}(\mu))\leq\int_{M}f(x)d\mu(x),
\]
provided the right hand side is well-defined.
\end{theorem}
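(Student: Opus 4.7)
The plan is to establish Jensen's inequality in three stages: the two-point case, finitely supported measures, and general $\mu\in\mathcal{P}_{2}(M)$.

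\textbf{Two-point case.} For $\mu=(1-t)\delta_{x_{0}}+t\delta_{x_{1}}$ the objective $z\mapsto \frac{1}{2}[(1-t)d^{2}(z,x_{0})+t\,d^{2}(z,x_{1})]$ is uniformly convex, and using $d(\gamma(s),x_{0})=s\,d(x_{0},x_{1})$ and $d(\gamma(s),x_{1})=(1-s)d(x_{0},x_{1})$ along the geodesic $\gamma$ joining $x_{0}$ and $x_{1}$, a one-variable computation identifies its unique minimizer as $\gamma(t)$. Hence $\operatorname*{bar}(\mu)=\gamma(t)$, and the inequality $f(\gamma(t))\leq(1-t)f(x_{0})+t\,f(x_{1})=\int f\,d\mu$ is exactly the convexity condition of Definition \ref{def2}.

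\textbf{Finitely supported case.} For $\mu=\sum_{i=1}^{n}\lambda_{i}\delta_{x_{i}}$ with $n\geq 3$, a naive induction on $n$ that peels off one summand is blocked by the failure of associativity of convex combinations stressed in the excerpt---this is the main obstacle. I would bypass it using Sturm's randomized averaging construction (\cite{Sturm}): let $(X_{k})_{k\geq 1}$ be i.i.d.\ with law $\mu$, set $S_{1}=X_{1}$, and recursively let $S_{k+1}$ be the point on the geodesic from $S_{k}$ to $X_{k+1}$ at parameter $1/(k+1)$. The two-point case applied pathwise gives
\[
f(S_{k+1})\leq \tfrac{k}{k+1}f(S_{k})+\tfrac{1}{k+1}f(X_{k+1}),
\]
and an induction on $k$ (taking expectations and using the common law of the $X_{k}$) yields $\mathbb{E}[f(S_{k})]\leq \int f\,d\mu$ for every $k$. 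Sturm's law of large numbers for NPC barycenters then provides $S_{k}\to\operatorname*{bar}(\mu)$ almost surely; combined with lower semicontinuity of $f$ and Fatou's lemma (legitimate under the well-definedness hypothesis, which forces $\int f^{-}\,d\mu<\infty$), this delivers $f(\operatorname*{bar}(\mu))\leq \int f\,d\mu$.

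\textbf{General case.} For arbitrary $\mu\in\mathcal{P}_{2}(M)$, work with the same i.i.d.\ sample and form the empirical measures $\mu_{n}=\frac{1}{n}\sum_{k=1}^{n}\delta_{X_{k}}$. Each $\mu_{n}$ falls under the finitely supported case, so $f(\operatorname*{bar}(\mu_{n}))\leq\int f\,d\mu_{n}$. The $W_{2}$-continuity of the barycenter map established in \cite{Sturm} gives $\operatorname*{bar}(\mu_{n})\to\operatorname*{bar}(\mu)$ almost surely, the classical strong law of large numbers gives $\int f\,d\mu_{n}\to\int f\,d\mu$ almost surely, and a final appeal to lower semicontinuity yields $f(\operatorname*{bar}(\mu))\leq\liminf_{n}f(\operatorname*{bar}(\mu_{n}))\leq\int f\,d\mu$. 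The conceptual difficulty throughout is the non-associativity of barycentric combinations, which dictates the probabilistic detour in place of the classical peeling induction; the remaining steps are routine once Sturm's barycenter machinery is imported.
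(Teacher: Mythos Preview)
The paper does not supply its own proof of Theorem~\ref{thm2}; it quotes the result from Eells--Fuglede \cite{EF}, records only the key ingredient of their argument---that the barycenter of a probability measure supported on a closed convex set $K$ must lie in $K$---and also points to Sturm's probabilistic proof \cite{Sturm}. Your proposal is essentially a fleshed-out version of the latter route: the inductive means $S_{k}$ and the NPC law of large numbers are precisely Sturm's machinery, so in that sense you have reproduced one of the two approaches the paper cites rather than diverged from it.

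For contrast with the Eells--Fuglede idea the paper highlights: once one knows that barycenters respect closed convex sets, Jensen's inequality follows almost immediately by applying that fact to the epigraph $\{(x,t):f(x)\leq t\}\subset M\times\mathbb{R}$, which is closed and convex because $f$ is l.s.c.\ and convex, and to the push-forward of $\mu$ under $x\mapsto(x,f(x))$. The barycenter of the push-forward is $(\operatorname*{bar}(\mu),\int f\,d\mu)$, and its membership in the epigraph is exactly the desired inequality. This avoids all limiting arguments, at the price of importing the nearest-point projection lemma for closed convex sets in NPC spaces; your probabilistic route trades that lemma for Sturm's law of large numbers.

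One technical wrinkle in your write-up: invoking Fatou on $\mathbb{E}[f(S_{k})]$ requires a uniform integrable lower bound on $f(S_{k})$, which you have not secured. It is cleaner to keep the pathwise inequality $f(S_{k})\leq\frac{1}{k}\sum_{j=1}^{k}f(X_{j})$ (immediate by induction from the two-point case), apply the ordinary strong law of large numbers to the right-hand side, and then use lower semicontinuity pointwise along the almost-sure limit $S_{k}\to\operatorname*{bar}(\mu)$; no Fatou is needed, and the same device handles your general case as well.
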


The proof of Eells and Fuglede is based on the following remark concerning
barycenters: If a probability measure $\mu$ is supported by a convex closed
set $K$, then its barycenter $\operatorname*{bar}(q)$ lies in $K$. A
probabilistic approach of Theorem 2 is due to Sturm \cite{Sturm}.

An immediate consequence of Theorem 2 is the following couple of inequalities
that work for any points $z,x_{1},...,x_{n},\,\allowbreak y_{1},...,y_{n}$ in
a global NPC space:%
\[
d^{2}\left(  \frac{1}{n}x_{1}\boxplus\cdots\boxplus\frac{1}{n}x_{n},z\right)
\leq\frac{d^{2}(x_{1},z)+\cdots+d^{2}(x_{n},z)}{n}%
\]
and%
\[
d\left(  \frac{1}{n}x_{1}\boxplus\cdots\boxplus\frac{1}{n}x_{n},\frac{1}%
{n}y_{1}\boxplus\cdots\boxplus\frac{1}{n}y_{n}\right)  \leq\frac{d(x_{1}%
,y_{1})+\cdots+d(x_{n},y_{n})}{n}.
\]

The next theorem offers a partial extension of Hardy-Littlewood-P\'{o}lya
Theorem to the context of global NPC spaces.

\begin{theorem}
\label{thm3}If%
\[
\sum_{i=1}^{m}\lambda_{i}\delta_{x_{i}}\prec\sum_{j=1}^{n}\mu_{j}\delta
_{y_{j}},
\]
in the global NPC space $M,$ then, for every continuous convex function $f$
defined on a convex subset $U\subset M$ containing all points $x_{i}$ and
$y_{j}$ we have
\[
\sum_{i=1}^{m}\lambda_{i}f(x_{i})\leq\sum_{j=1}^{n}\mu_{j}f(y_{j}).
\]

\end{theorem}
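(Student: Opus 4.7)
The plan is to unwind the definition of majorization given in (9)--(11) and then apply Jensen's inequality (Theorem \ref{thm2}) row by row. The key observation is that condition (11) identifies each $x_i$ as the barycenter of a discrete probability measure:
\[
x_i = \operatorname*{bar}\!\Bigl(\sum_{j=1}^n a_{ij}\,\delta_{y_j}\Bigr),
\]
since $(a_{ij})_j$ are nonnegative weights summing to $1$ (row-stochasticity) and the defining minimization problem in (11) is precisely the one characterizing the Cartan barycenter. Because the measure $\sum_{j} a_{ij}\delta_{y_j}$ is finitely supported, it automatically lies in $\mathcal{P}_2(M)$, so Theorem \ref{thm2} applies.

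Next, I would apply Jensen's inequality to the continuous (hence lower semicontinuous) convex function $f$ restricted to the convex set $U$, which contains the supports of these measures. This yields, for every $i=1,\dots,m$,
\[
f(x_i)\;\le\;\int_M f\,d\!\Bigl(\sum_{j=1}^n a_{ij}\,\delta_{y_j}\Bigr)\;=\;\sum_{j=1}^n a_{ij}\,f(y_j).
\]

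Finally, I would multiply each of these scalar inequalities by $\lambda_i\ge0$, sum over $i$, and interchange the two finite sums:
\[
\sum_{i=1}^m \lambda_i f(x_i)\;\le\;\sum_{i=1}^m\sum_{j=1}^n \lambda_i a_{ij} f(y_j)\;=\;\sum_{j=1}^n\Bigl(\sum_{i=1}^m a_{ij}\lambda_i\Bigr)f(y_j)\;=\;\sum_{j=1}^n \mu_j f(y_j),
\]
where the last equality uses condition (10). This gives the desired inequality.

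The argument is essentially mechanical once one recognizes the barycentric reformulation of (11). There is no real obstacle: all the heavy lifting (existence/uniqueness of the minimizers, and the nontrivial Jensen inequality in the NPC setting) is already packaged into the results quoted earlier in the excerpt. One only needs to be mildly careful that $f$ is defined on the convex set $U$ containing all the $y_j$, so that each auxiliary measure $\sum_j a_{ij}\delta_{y_j}$ is supported in $U$ and its barycenter $x_i$ also lies in $U$ (this follows from the barycenter-of-supports remark that Eells and Fuglede use in their proof of Theorem \ref{thm2}), ensuring $f(x_i)$ is well-defined.
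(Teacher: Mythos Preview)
Your proof is correct and follows essentially the same route as the paper's own proof: recognize each $x_i$ as the barycenter of $\sum_j a_{ij}\delta_{y_j}$ via condition (11), apply Jensen's inequality (Theorem \ref{thm2}) to obtain $f(x_i)\le\sum_j a_{ij}f(y_j)$, then multiply by $\lambda_i$, sum over $i$, and use condition (10). Your added remarks about the measures lying in $\mathcal{P}_2(M)$ and the barycenter landing in $U$ are helpful elaborations but do not change the structure of the argument.
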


\begin{proof}
By our hypothesis, there is an $m\times n$-dimensional matrix $A=(a_{ij}%
)_{i,j}$ that is stochastic on rows and verifies the conditions (10) and (11).
The last condition, shows that each point $x_{i}$ is the barycenter of the
probability measure $\sum_{j=1}^{n}a_{ij}\delta_{y_{j}}$, so by Jensen's
inequality we infer that
\[
f(x_{i})\leq\sum_{j=1}^{n}a_{ij}f(y_{j}).
\]
Multiplying each side by $\lambda_{i}$ and then summing up over $i$ from $1$
to $m,$ we conclude that
\begin{align*}
\sum_{i=1}^{m}\lambda_{i}f(x_{i})  &  \leq\sum_{i=1}^{m}\left(  \lambda
_{i}\sum_{j=1}^{n}a_{ij}f(y_{j})\right) \\
&  =\sum_{j=1}^{n}\left(  \sum_{i=1}^{m}a_{ij}\lambda_{i}\right)  f(y_{j})\\
&  =\sum_{j=1}^{n}\mu_{j}f(y_{j}).
\end{align*}

\end{proof}

In a global NPC space the distance function from a convex set is a convex
function. See \cite{Sturm}, Corollary 2.5. Combining this fact with Theorem
\ref{thm3} we infer the following result.

\begin{corollary}
\label{cor1}If%
\[
\sum_{i=1}^{m}\lambda_{i}\delta_{x_{i}}\prec\sum_{j=1}^{n}\mu_{j}\delta
_{y_{j}},
\]
and all coefficients $\lambda_{i}$ are positive, then $\{x_{1},...,x_{m}\}$ is
contained in the convex hull of $\{y_{1},...,y_{n}\}.$

In particular, the points $x_{i}$ spread out less than the points $y_{j}.$
\end{corollary}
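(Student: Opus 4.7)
The plan is to feed Theorem \ref{thm3} a convex function that vanishes on the set of majorants $\{y_{1},\dots,y_{n}\}$ and is strictly positive off the convex hull. The natural choice is the distance from the closed convex hull of the $y_{j}$'s, whose convexity is exactly the fact the authors cite (\cite{Sturm}, Corollary 2.5).

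More precisely, let $K$ denote the closed convex hull of $\{y_{1},\dots,y_{n}\}$ in $M$, and define
\[
f:M\to\mathbb{R},\qquad f(z)=d(z,K).
\]
Since $K$ is a closed convex subset of a global NPC space, $f$ is continuous and convex on $M$; since $M$ itself is convex, it is an admissible choice of convex subset $U$ in Theorem \ref{thm3}. Moreover $f\geq 0$ throughout $M$ and $f(y_{j})=0$ for every $j$, so the right-hand side of the inequality supplied by Theorem \ref{thm3} collapses to zero:
\[
\sum_{i=1}^{m}\lambda_{i}f(x_{i})\leq\sum_{j=1}^{n}\mu_{j}f(y_{j})=0.
\]

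Because each summand $\lambda_{i}f(x_{i})$ is nonnegative and each $\lambda_{i}$ is strictly positive by hypothesis, this forces $f(x_{i})=0$ for $i=1,\dots,m$, i.e.\ each $x_{i}$ lies in $K$. This is exactly the assertion that $\{x_{1},\dots,x_{m}\}$ is contained in the (closed) convex hull of $\{y_{1},\dots,y_{n}\}$. The qualitative remark that the $x_{i}$'s ``spread out less'' than the $y_{j}$'s is then just the geometric reading of this inclusion, in perfect analogy with the Euclidean case discussed in the introduction.

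The only real obstacle is making sure the function $f$ is genuinely convex on all of $M$; this is where the NPC hypothesis enters in an essential way, via the cited result of Sturm. Once that ingredient is in hand, the proof is a one-line application of Jensen's inequality packaged inside Theorem \ref{thm3}, together with the positivity of the weights $\lambda_{i}$.
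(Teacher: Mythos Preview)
Your argument is correct and is exactly the approach the paper indicates: the authors simply note that the distance from a convex set is convex in a global NPC space (\cite{Sturm}, Corollary 2.5) and say that combining this with Theorem~\ref{thm3} gives the corollary. You have spelled out precisely that combination, including the observation that positivity of the $\lambda_{i}$ forces each $d(x_{i},K)=0$.
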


Another application of Theorem \ref{thm3} yields a new set of inequalities
verified by the functions $d(\cdot,$ $z)$ in a global NPC space $M$. These
functions are convex and the same is true for the functions $f(d(\cdot,$ $z))$
whenever $f$ is a continuous nondecreasing convex function defined on
$\mathbb{R}_{+}.$ According to Theorem \ref{thm3}, if $\frac{1}{n}\sum
_{i=1}^{n}\delta_{x_{i}}\prec\frac{1}{n}\sum_{i=1}^{n}\delta_{y_{i}}$ in $M,$
then $\sum_{i=1}^{n}f(d(x_{i},z))\leq\sum_{i=1}^{n}f(d(y_{i},z)).$\ Taking
into account Remark 1 we arrive at the following result:

\begin{corollary}
\label{cor2}If $\frac{1}{n}\sum_{i=1}^{n}\delta_{x_{i}}\prec\frac{1}{n}%
\sum_{i=1}^{n}\delta_{y_{i}}$ in $M=(M,d),$ then for all $z\in M,$%
\[
\left(  d(x_{1},z),...,d(x_{n},z)\right)  \prec_{\ast}\left(  d(y_{1}%
,z),...,d(y_{n},z)\right)
\]

\end{corollary}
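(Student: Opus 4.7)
The strategy is to translate the claim into the Tomi\'{c}--Weyl test for weak majorization recalled in Remark 1, and then derive the required scalar inequality from Theorem \ref{thm3}. Concretely, I would fix an arbitrary point $z \in M$ and an arbitrary continuous nondecreasing convex function $f$ defined on an interval of $\mathbb{R}_{+}$ that contains all the (nonnegative) numbers $d(x_{i},z)$ and $d(y_{j},z)$, and aim to prove
\[
\sum_{i=1}^{n} f(d(x_{i},z)) \leq \sum_{i=1}^{n} f(d(y_{i},z)).
\]
Once this inequality is established for every such $f$, Remark 1 applied to the two distance vectors yields the desired weak majorization.

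To secure that inequality I would introduce the auxiliary function $g : M \to \mathbb{R}$ given by $g(x) = f(d(x,z))$ and argue that it is continuous and convex on $M$. The continuity is clear. For convexity, the paragraph following Definition \ref{def2} records that in a global NPC space the distance function $d(\cdot,z)$ is convex, and the composition of a convex function on $M$ with a nondecreasing convex function on $\mathbb{R}_{+}$ is again convex on $M$. With $g$ now qualifying as a test function for Theorem \ref{thm3}, applying that theorem to the hypothesis
\[
\frac{1}{n}\sum_{i=1}^{n}\delta_{x_{i}} \prec \frac{1}{n}\sum_{i=1}^{n}\delta_{y_{i}}
\]
produces $\frac{1}{n}\sum_{i}g(x_{i}) \leq \frac{1}{n}\sum_{i}g(y_{i})$, which after clearing the common factor $1/n$ is exactly the scalar inequality needed above.

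I do not anticipate any real obstacle: the corollary is essentially a packaging step, combining Theorem \ref{thm3} (for the specific family of convex test functions $f \circ d(\cdot,z)$) with the Tomi\'{c}--Weyl characterization of $\prec_{\ast}$ from Remark 1. The only mild point of care is that Remark 1 allows the function $f$ to be prescribed on any interval containing the relevant components; since all distances are nonnegative, working on a suitable subinterval of $\mathbb{R}_{+}$ is enough, and this is precisely the regime in which the composition with $d(\cdot,z)$ is guaranteed to remain convex.
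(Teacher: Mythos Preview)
Your proposal is correct and follows essentially the same route as the paper: the paper observes that $d(\cdot,z)$ is convex, hence $f(d(\cdot,z))$ is convex for every continuous nondecreasing convex $f$, applies Theorem~\ref{thm3} to obtain $\sum_{i} f(d(x_{i},z))\leq\sum_{i} f(d(y_{i},z))$, and then invokes the Tomi\'{c}--Weyl characterization from Remark~1 to conclude the weak majorization. Your handling of the domain issue for $f$ is also appropriate.
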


According to a result due to Ando (see \cite{MOA}, Theorem B.3a, p. 158), the
converse of Corollary \ref{cor2} works when $M=\mathbb{R}$.

The entropy function,%
\[
H(t)=-t\log t,
\]
is concave and decreasing for $t\in\lbrack1/e,\infty),$ so by Corollary
\ref{cor2} we infer that%
\[%
{\displaystyle\prod\nolimits_{i=1}^{n}}
d(x_{i},z)^{d(x_{i},z)}\geq%
{\displaystyle\prod\nolimits_{i=1}^{n}}
d(y_{i},z)^{d(y_{i},z)},
\]
whenever $\frac{1}{n}\sum_{i=1}^{n}\delta_{x_{i}}\prec\frac{1}{n}\sum
_{j=1}^{n}\delta_{y_{j}}$ and all the points $x_{i}$ and $y_{i}$ are at a
distance $\geq1/e$ from $z.$\newline

Many other inequalities involving distances in a global NPC space can be
derived from Corollary \ref{cor2} and the following result due to Fan and
Mirsky: if $x,y\in\mathbb{R}_{+}^{n},$ then $x\prec_{\ast}y$ if and only if%
\[
\Phi(x)\leq\Phi(y)
\]
for all functions $\Phi:\mathbb{R}^{n}\rightarrow\mathbb{R}$ such that:

\begin{enumerate}
\item $\Phi(x)>0$ when $x\neq0;$

\item $\Phi(\alpha x)=\left\vert \alpha\right\vert \Phi(x)$ for all real
$\alpha;$

\item $\Phi(x+y)\leq\Phi(x)+\Phi(y);$

\item $\Phi(x_{1},...,x_{n})=\Phi(\varepsilon_{1}x_{\pi(1)},...,\varepsilon
_{n}x_{\pi(n)})$ whenever each $\varepsilon_{i}$ belongs to $\{-1,1\}$ and
$\pi$ is any permutation of $\left\{  1,...,n\right\}  .$
\end{enumerate}

For details, see \cite{MOA}, Proposition B6, p. 160.

It is worth noticing the connection between our definition of majorization and
the subject of Schur convexity (as presented in \cite{MOA}):

\begin{theorem}
\label{thm4}Suppose that $\frac{1}{n}\sum_{i=1}^{n}\delta_{x_{i}}\prec\frac
{1}{n}\sum_{i=1}^{n}\delta_{y_{i}}$ in the global NPC space $M=(M,d),$ and
$f:M^{n}\rightarrow\mathbb{R}$ is a continuous convex function invariant under
the permutation of coordinates. Then%
\[
f(x_{1},...,x_{n})\leq f(y_{1},...,y_{n}).
\]

\end{theorem}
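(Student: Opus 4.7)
The plan is to reduce the statement to an application of Jensen's inequality (Theorem \ref{thm2}) on the product global NPC space $M^n$, via the Birkhoff--von Neumann decomposition. First, notice that because the weights on both sides are uniform ($\lambda_i=\mu_j=1/n$), the stochastic-on-rows matrix $A=(a_{ij})$ furnished by the majorization (10) automatically becomes stochastic on columns as well, i.e.\ $A$ is doubly stochastic. Hence I may write
\[
A=\sum_{k=1}^{N}\theta_{k}P_{\pi_{k}},
\]
where $\theta_{k}\geq 0$, $\sum_{k}\theta_{k}=1$, and each $P_{\pi_{k}}$ is the permutation matrix of some $\pi_{k}\in S_{n}$.

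Next I would work in the product space $M^{n}=(M^{n},d_{M^{n}})$, which is itself a global NPC space by the remark in the excerpt, and introduce the discrete probability measure
\[
\nu=\sum_{k=1}^{N}\theta_{k}\,\delta_{(y_{\pi_{k}(1)},\,\ldots,\,y_{\pi_{k}(n)})}\in\mathcal{P}_{2}(M^{n}).
\]
The key computation is that the barycenter on a product NPC space splits coordinate-wise: since $d_{M^{n}}^{2}(z,x)=\sum_{i=1}^{n}d^{2}(z_{i},x_{i})$, the problem $\arg\min_{z}\int d_{M^{n}}^{2}(z,x)\,d\nu(x)$ decouples into $n$ independent one-coordinate barycenter problems driven by the marginals of $\nu$. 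The $i$-th marginal of $\nu$ places mass $\sum_{k:\pi_{k}(i)=j}\theta_{k}=a_{ij}$ on $y_{j}$, so it equals $\sum_{j=1}^{n}a_{ij}\delta_{y_{j}}$, whose barycenter is precisely $x_{i}$ by condition (11). Therefore
\[
\operatorname*{bar}(\nu)=(x_{1},\ldots,x_{n}).
\]

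Finally, applying Jensen's inequality (Theorem \ref{thm2}) to the continuous convex function $f:M^{n}\to\mathbb{R}$ and the measure $\nu$ yields
\[
f(x_{1},\ldots,x_{n})=f(\operatorname*{bar}(\nu))\leq\int_{M^{n}}f\,d\nu=\sum_{k=1}^{N}\theta_{k}\,f(y_{\pi_{k}(1)},\ldots,y_{\pi_{k}(n)}),
\]
and the permutation invariance of $f$ collapses every term on the right to $f(y_{1},\ldots,y_{n})$, giving the desired inequality. The only nontrivial point in this plan is the splitting of the product barycenter into coordinate marginals; everything else is a direct invocation of Birkhoff--von Neumann and Jensen's inequality as already established in the paper.
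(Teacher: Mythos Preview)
Your proof is correct and follows essentially the same approach as the paper: the paper treats the case $n=3$ by writing out an explicit Birkhoff--von~Neumann type decomposition of the $3\times 3$ doubly stochastic matrix, identifies $(x_{1},x_{2},x_{3})$ as the barycenter in $M^{3}$ of the corresponding convex combination of the permutations of $(y_{1},y_{2},y_{3})$, and concludes via Jensen's inequality and the symmetry of $f$. Your invocation of the abstract Birkhoff--von~Neumann theorem together with the coordinate-wise splitting of the product barycenter simply carries the same argument through for general $n$.
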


\begin{proof}
For the sake of simplicity we will restrict here to the case where $n=3.$

According to the definition of majorization, if $\frac{1}{3}\sum_{i=1}%
^{3}\delta_{x_{i}}\prec\frac{1}{3}\sum_{i=1}^{3}\delta_{y_{i}}$, then there
exists a doubly stochastic matrix $A=(a_{ij})_{i,j=1}^{3}$ such that%
\[
x_{i}=\operatorname*{bar}(\sum_{j=1}^{n}a_{ij}\delta_{y_{j}})\quad\text{for
}i=1,...,n.
\]
As $A$ can be uniquely represented under the form%
\[
A=\left(
\begin{array}
[c]{ccc}%
\lambda_{1}+\lambda_{2} & \lambda_{3}+\lambda_{5} & \lambda_{4}+\lambda_{6}\\
\lambda_{3}+\lambda_{4} & \lambda_{1}+\lambda_{6} & \lambda_{2}+\lambda_{5}\\
\lambda_{5}+\lambda_{6} & \lambda_{2}+\lambda_{4} & \lambda_{1}+\lambda_{3}%
\end{array}
\right)  ,
\]
where all $\lambda_{k}$ are nonnegative and $\sum_{k=1}^{6}\lambda_{k}=1$ (a
simple matter of linear algebra) we can represent the elements $x_{j}$ as
\begin{align*}
x_{1}  &  =\operatorname*{bar}((\lambda_{1}+\lambda_{2})\delta_{y_{1}%
}+(\lambda_{3}+\lambda_{4})\delta_{y_{2}}+(\lambda_{5}+\lambda_{6}%
)\delta_{y_{3}}),\\
x_{2}  &  =\operatorname*{bar}((\lambda_{3}+\lambda_{5})\delta_{y_{1}%
}+(\lambda_{1}+\lambda_{6})\delta_{y_{2}}+(\lambda_{2}+\lambda_{4}%
)\delta_{y_{3}}),\\
x_{3}  &  =\operatorname*{bar}\left(  (\lambda_{4}+\lambda_{6})\delta_{y_{1}%
}+(\lambda_{2}+\lambda_{5})\delta_{y_{2}}+(\lambda_{1}+\lambda_{3}%
)\delta_{y_{3}}\right)  .
\end{align*}

It is easy to see that $(x_{1},x_{2},x_{3})$ is the barycenter of%
\begin{align*}
\mu &  =\lambda_{1}\delta_{(y_{1},y_{2},y_{3})}+\lambda_{2}\delta
_{(y_{1},y_{3},y_{2})}+\lambda_{3}\delta_{(y_{2},y_{1},y_{3})}\\
&  +\lambda_{4}\delta_{(y_{2},y_{3},y_{1})}+\lambda_{5}\delta_{(y_{3}%
,y_{1},y_{2})}+\lambda_{6}\delta_{(y_{3},y_{2},y_{1})},
\end{align*}
so by Jensen's inequality and the symmetry of $f$ we get%
\begin{multline*}
f(x_{1},...,x_{n})\leq\lambda_{1}f(y_{1},y_{2},y_{3})+\lambda_{2}f(y_{1}%
,y_{3},y_{2})+\lambda_{3}f(y_{2},y_{1},y_{3})\\
+\lambda_{4}f(y_{2},y_{3},y_{1})+\lambda_{5}f(y_{3},y_{1},y_{2})+\lambda
_{6}f(y_{3},y_{2},y_{1})\\
=(\lambda_{1}+\cdots+\lambda_{6})f(y_{1},y_{2},y_{3})=f(y_{1},y_{2},y_{3}).
\end{multline*}

\end{proof}

The following consequence of Theorem \ref{thm4} relates the majorization of
measures to the dispersion of their supports.

\begin{corollary}
\label{cor3}If $\frac{1}{n}\sum_{i=1}^{n}\delta_{x_{i}}\prec\frac{1}{n}%
\sum_{i=1}^{n}\delta_{y_{i}}$ in the global NPC space $M=(M,d)$, then%
\[
\sum_{1\leq i<j\leq n}d^{\alpha}(x_{i},x_{j})\leq\sum_{1\leq i<j\leq
n}d^{\alpha}(y_{i},y_{j})
\]
for every $\alpha\geq1$.
\end{corollary}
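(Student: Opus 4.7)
The plan is to reduce Corollary \ref{cor3} to a direct application of Theorem \ref{thm4}. The natural candidate to feed into that theorem is the function
\[
f(z_1, \ldots, z_n) = \sum_{1 \leq i < j \leq n} d^\alpha(z_i, z_j)
\]
on the product global NPC space $M^n$. This $f$ is manifestly invariant under the permutation of coordinates and is continuous, so the whole task reduces to checking that $f$ is convex in the sense of Definition \ref{def2}.

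First I would verify that $d^\alpha : M \times M \to \mathbb{R}$ is convex. The excerpt records that the distance function $d$ itself is convex on $M \times M$, and $t \mapsto t^\alpha$ is convex and nondecreasing on $[0, \infty)$ whenever $\alpha \geq 1$; composing this scalar function with the nonnegative convex function $d$ therefore preserves convexity along any geodesic of $M \times M$.

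Next I would transfer convexity from $M \times M$ to each summand of $f$ on $M^n$. By the formula for the product metric recalled in the paper, a geodesic $\gamma(t) = (\gamma_1(t), \ldots, \gamma_n(t))$ in $M^n$ has each coordinate $\gamma_k$ a geodesic in $M$, so the projection $(\gamma_i, \gamma_j)$ is a geodesic in $M \times M$. Hence each function $(z_1, \ldots, z_n) \mapsto d^\alpha(z_i, z_j)$ is convex on $M^n$, and $f$ is convex as a finite sum of convex functions. An appeal to Theorem \ref{thm4} then yields
\[
\sum_{1 \leq i < j \leq n} d^\alpha(x_i, x_j) = f(x_1, \ldots, x_n) \leq f(y_1, \ldots, y_n) = \sum_{1 \leq i < j \leq n} d^\alpha(y_i, y_j).
\]

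The argument is essentially bookkeeping, and there is no genuine obstacle. The only point that requires a moment of thought is that coordinate projections in the product NPC space send geodesics to geodesics, but this is immediate from the definition of the product metric.
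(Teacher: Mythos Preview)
Your proposal is correct and is exactly the argument the paper has in mind: the corollary is presented there simply as a ``consequence of Theorem \ref{thm4}'', and the only missing detail---that $f(z_1,\dots,z_n)=\sum_{i<j}d^{\alpha}(z_i,z_j)$ is a continuous, symmetric, convex function on $M^n$---is precisely what you supply. Your verification that coordinate projections of geodesics in $M^n$ are geodesics in $M\times M$, together with the convexity of $d$ on $M\times M$ and the fact that $t\mapsto t^{\alpha}$ is convex nondecreasing for $\alpha\geq 1$, fills in everything the paper leaves implicit.
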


Alert readers have probably already noticed that essential for the theory of
majorization presented above is the occurrence of the following two facts:

\begin{enumerate}
\item the existence of a unique minimizer for the functionals of the form%
\[
J(x)=\frac{1}{2}\sum_{i=1}^{m}\lambda_{i}d^{2}(x,x_{i})
\]
(thought of as the barycenter $\operatorname*{bar}$($\lambda)$ of the discrete
probability measure $\lambda=\sum_{i=1}^{m}\lambda_{i}\delta_{x_{i}});$

\item the Jensen type inequality,%
\[
f(\operatorname*{bar}(\lambda))\leq\int fd\lambda=\sum_{i=1}^{m}\lambda
_{i}f(x_{i}),
\]
for $f$ in our class of generalized convex functions.
\end{enumerate}

The recent paper of Agueh and Carlier \cite{AC} shows that such a framework is
available also in the case of certain Borel probability measures, equipped
with the Wasserstein metric. More precisely they consider the space
$\mathcal{P}_{2}(\mathbb{R}^{N})$ (of all Borel probability measures on
$\mathbb{R}^{N}$ having finite second moments) endowed with the Wasserstein
metric,
\[
\mathcal{W}_{2}(\mu,\nu)=\inf\left(  \int_{\mathbb{R}^{N}\times\mathbb{R}^{N}%
}\left\Vert x-y\right\Vert ^{2}d\gamma(x,y)\right)  ^{1/2},
\]
where the infimum is taken over all Borel probability measures $\gamma$ on
$\mathbb{R}^{N}\times\mathbb{R}^{N}$ with marginals $\mu$ and $\nu$.

The barycenter $\operatorname*{bar}(\sum_{i=1}^{m}\lambda_{i}\delta_{\nu_{i}%
}),$ of a discrete probability measure $\sum_{i=1}^{m}\lambda_{i}\delta
_{\nu_{i}},$ is defined as the minimizer of the functional%
\[
J(\nu)=\frac{1}{2}\sum_{i=1}^{m}\lambda_{i}\mathcal{W}_{2}^{2}(\nu_{i},\nu).
\]
This minimizer is unique when at least one of the measures $\nu_{i}$ vanishes
on every Borel set of Hausdorff dimension $N-1$. See \cite{AC}, Proposition
2.2 and Proposition 3.5.

The natural class of convex function on the Wasserstein space is that of
functions convex along barycenters. According to \cite{AC}, Definition 7.1, a
function $\mathcal{F}:$ $\mathcal{P}_{2}(\mathbb{R}^{N})\rightarrow\mathbb{R}$
is said to be\emph{ convex along barycenters} if for any discrete probability
measure $\sum_{i=1}^{m}\lambda_{i}\delta_{\nu_{i}}$ on $\mathcal{P}%
_{2}(\mathbb{R}^{N})$ we have%
\[
\mathcal{F}(\operatorname*{bar}(\sum_{i=1}^{m}\lambda_{i}\delta_{\nu_{i}%
}))\leq\sum_{i=1}^{m}\lambda_{i}\mathcal{F}(\nu_{i}).
\]

This notion of convexity coincides with the notion of displacement convexity
introduced by McCann \cite{Mc} if $N=1,$ and is stronger than this in the
general case. However, the main examples of displacement convex functions
(such as the the internal energy, the potential energy and the interaction
energy) are also examples of functions convex along barycenters. See
\cite{AC}, Proposition 7.7.

\begin{theorem}
The concept of majorization and all results noticed in the case of global
NPC\ spaces (in particular, Theorem 3 and Theorem 4) remain valid in the
context discrete probability measures on $\mathcal{P}_{2}(\mathbb{R}^{N})$
having unique barycenters and the functions $\mathcal{F}:$ $\mathcal{P}%
_{2}(\mathbb{R}^{N})\rightarrow\mathbb{R}$ convex along barycenters.
\end{theorem}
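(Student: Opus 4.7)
The plan is to transplant the architecture of Theorems \ref{thm3} and \ref{thm4} almost verbatim, replacing the metric $d$ with the Wasserstein metric $\mathcal{W}_{2}$, points of the NPC space $M$ with measures in $\mathcal{P}_{2}(\mathbb{R}^{N})$, Cartan--Sturm barycenters with Agueh--Carlier barycenters, and continuous convex functions with functions convex along barycenters. The first step is to define
\[
\sum_{i=1}^{m}\lambda_{i}\delta_{\nu_{i}}\prec\sum_{j=1}^{n}\mu_{j}\delta_{\eta_{j}}
\]
by the existence of an $m\times n$ matrix $A=(a_{ij})$ stochastic on rows, satisfying the analogue of (\ref{10}) together with
\[
\nu_{i}=\arg\min_{\nu\in\mathcal{P}_{2}(\mathbb{R}^{N})}\frac{1}{2}\sum_{j=1}^{n}a_{ij}\mathcal{W}_{2}^{2}(\nu,\eta_{j}),\quad i=1,\dots,m,
\]
under the standing hypothesis that each of these optimization problems admits a unique solution (which is guaranteed under the regularity condition of Propositions 2.2 and 3.5 of \cite{AC}).

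With this definition in hand, the extension of Theorem \ref{thm3} is a literal copy of its proof. Condition $(11)$ identifies each $\nu_{i}$ as the barycenter of the discrete probability measure $\sum_{j}a_{ij}\delta_{\eta_{j}}$; the defining inequality for convexity along barycenters yields $\mathcal{F}(\nu_{i})\leq\sum_{j}a_{ij}\mathcal{F}(\eta_{j})$; multiplying by $\lambda_{i}$, summing over $i$, and swapping the order of summation via the analogue of (\ref{10}) produces the target inequality $\sum_{i}\lambda_{i}\mathcal{F}(\nu_{i})\leq\sum_{j}\mu_{j}\mathcal{F}(\eta_{j})$. This is precisely the argument of Theorem \ref{thm3}, and it invokes only the two ingredients (unique barycenter and Jensen-type inequality) that the authors have already isolated.

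For the extension of Theorem \ref{thm4} I would work on the product space $\mathcal{P}_{2}(\mathbb{R}^{N})^{n}$ equipped with the $\ell^{2}$-product of $\mathcal{W}_{2}$. The crucial observation is that the barycenter functional on this product decouples across coordinates: writing a measure-valued tuple as $\nu=(\nu^{(1)},\dots,\nu^{(n)})$ and each atom as $\sigma_{k}=(\sigma_{k}^{(1)},\dots,\sigma_{k}^{(n)})$, the sum $\tfrac{1}{2}\sum_{k}\lambda_{k}\mathcal{W}_{2}^{2}(\nu,\sigma_{k})$ splits as $\sum_{\ell}\tfrac{1}{2}\sum_{k}\lambda_{k}\mathcal{W}_{2}^{2}(\nu^{(\ell)},\sigma_{k}^{(\ell)})$, whose minimizer is the tuple of componentwise barycenters. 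Consequently, the same decomposition of a $3\times 3$ doubly stochastic matrix as a convex combination of the six permutation matrices goes through, identifying $(\nu_{1},\nu_{2},\nu_{3})$ as the barycenter of a discrete measure on $\mathcal{P}_{2}(\mathbb{R}^{N})^{3}$ supported on the six permutations of $(\eta_{1},\eta_{2},\eta_{3})$, and the symmetry of $\mathcal{F}$ closes the argument exactly as before. The main obstacle is securing uniqueness of the product-space barycenter; this reduces componentwise to uniqueness in $\mathcal{P}_{2}(\mathbb{R}^{N})$, which has been built into our hypothesis. The corollaries then carry over as soon as one verifies that $\mathcal{W}_{2}(\cdot,\sigma)$ and its monotone convex transforms are convex along barycenters, which is a direct consequence of the definition together with Proposition 7.7 of \cite{AC}.
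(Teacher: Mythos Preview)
The paper states this theorem without proof, having just isolated the two ingredients (unique barycenter and the Jensen-type inequality built into the definition of convexity along barycenters) that make the arguments of Theorems~\ref{thm3} and~\ref{thm4} transplant verbatim; your proposal carries out precisely that transplantation and is correct. Your handling of the product space $\mathcal{P}_{2}(\mathbb{R}^{N})^{n}$ for the analogue of Theorem~\ref{thm4}---noting that the barycenter functional decouples across coordinates so that the Birkhoff-type decomposition and the symmetry argument go through unchanged---is exactly the intended reading.
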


We end our paper with an open problem that arises in connection to Rado's
geometric characterization of majorization in $\mathbb{R}^{n}:$ $(x_{1}%
,...,x_{n})\prec(y_{1},...,y_{n})$ in $\mathbb{R}^{n}$ if and only if
$(x_{1},...,x_{n})$ lies in the convex hull of the $n!$ permutations of
$(y_{1},...,y_{n})$. See \cite{MOA}, Corollary B.3, p. 34. A relation of
majorization of this kind can be introduced in the power space $M^{n}$ (of any
global NPC space $M=(M,d)$ as well as of $\mathcal{P}_{2}(\mathbb{R}^{N}))$ by
putting%
\[
(x_{1},...,x_{n})\prec(y_{1},...,y_{n})\text{ }%
\]
if $\frac{1}{n}\sum_{i=1}^{n}\delta_{x_{i}}\prec\frac{1}{n}\sum_{i=1}%
^{n}\delta_{y_{i}}.$ The proof of Theorem \ref{thm4} yields immediately the
necessity part of Rado's characterization: if $(x_{1},...,x_{n})\prec
(y_{1},...,y_{n})$ in $M^{n},$ then $(x_{1},...,x_{n})$ lies in the convex
hull of the $n!$ permutations of $(y_{1},...,y_{n})$. Do the converse work? We
know that the answer is positive if $M$ is a Hilbert space but the general
case remains open.

\medskip

\noindent\textbf{Acknowledgement}. This paper is supported by a grant of the
Romanian National Authority for Scientific Research, CNCS -- UEFISCDI, project
number PN-II-ID-PCE-2011-3-0257.

\end{document}